\theoremstyle{plain}
\newtheorem{thm}{Theorem}[section]
\newtheorem{lem}[thm]{Lemma}
\newtheorem{cor}[thm]{Corollary}
\theoremstyle{definition}
\newtheorem{defn}{Definition}[section]
\newtheorem{exmp}{Example}[section]
\theoremstyle{remark}
\newtheorem{rem}{Remark}
\newtheorem{note}{Note}
\newcommand{\beeq}{\begin{equation}}
\newcommand{\eneq}{\end{equation}}
\newcommand{\bear}{\begin{eqnarray}}
\newcommand{\enar}{\end{eqnarray}}
\newcommand{\bearno}{\begin{eqnarray*}}
\newcommand{\enarno}{\end{eqnarray*}}
\newcommand{\expect}[1]{{\mathbb E}\left\{{\displaystyle #1}\right\}}
\newcommand{\expectbig}[1]{{\mathbb E}\Big\{{\displaystyle #1}\Big\}}
\newcommand{\expectsmall}[1]{{\mathbb E}\{{\displaystyle #1}\}}
\newcommand{\pr}[1]{{\mathbb P}\left\{{\displaystyle #1}\right\}}
\newcommand{\prm}[1]{{\mathbb P}^m\left\{{\displaystyle #1}\right\}}
\newcommand{\comps}{{H}}
\begin{document}

\title{Stability of Random Admissible-Set Scheduling \\
in Spatially Continuous Wireless Systems}

\author{N. Bouman\thanks{Eindhoven University of Technology, P.O. Box 513, 5600 MB Eindhoven, The Netherlands.} \and S.C. Borst\footnotemark[1] \thanks{Alcatel-Lucent Bell Labs, P.O. Box 636, Murray Hill, NJ 07974-0636, USA.} \and J.S.H. van Leeuwaarden\footnotemark[1]}

\date{}

\maketitle

\begin{abstract}
\noindent We examine the stability of wireless networks whose users are distributed over a compact space. A subset of users is called {\it admissible} when their simultaneous activity obeys the prevailing interference constraints and, in each time slot, an admissible subset of users is selected uniformly at random to transmit one packet. We show that, under a mild condition, this random admissible-set scheduling mechanism achieves maximum stability in a broad set of scenarios, and in particular in symmetric cases. The proof relies on a description of the system as a measure-valued process and the identification of a Lyapunov function.

\bigskip
\noindent {\it 2010 Mathematics Subject Classification:} 60J20, 68M20, 90B85

\noindent {\it Keywords:} Harris recurrent, Foster-Lyapunov, Continuous space, Measure-valued process, Throughput-optimal, Wireless Systems, Scheduling
\end{abstract}

\section{Introduction}
This paper examines the stability of a broad class of wireless networks whose users arrive to a compact space, according to some spatial stochastic process. Users independently take their locations at random according to some general distribution. Each user has a random, generally distributed, number of packets to transmit. 
Time is divided into time slots, and each user can transmit at most one packet per time slot.

A subset of users is called {\it admissible} when their simultaneous activity obeys the prevailing interference constraints. In each time slot, a subset of users is selected for transmission from all admissible sets uniformly at random and each of the users in the selected subset transmits one packet during the time slot. In practice, the relevant interference constraints depend on various system-specific properties, such as the propagation environment and the operation of the physical and medium-access layers of the network. In the present paper we therefore adopt generic feasibility criteria, which in particular cover both the SINR (Signal-to-Interference-and-Noise Ratio) model and the protocol model~\cite{GK00} as two canonical models for interference.

For wireless networks we are primarily interested in interference constraints that become looser when users are further apart and in one- and two-dimensional spaces. Our results, however, hold in more generality, which is why we present our results in terms of a particle system on a compact space of arbitrary dimension, to which batches of particles arrive according to some general stochastic process. We investigate under which conditions this particle system is stable, in the sense that the system is cleared from all particles infinitely often.

We thus investigate stability in the context of a model that combines a scheduling discipline operating under
interference constraints and a continuous spatial setting.
While these two elements have each been considered in isolation
before, the present paper is, to the best of our knowledge, the first to capture both elements in conjunction.
Indeed, {\it stability of wireless networks} has been widely studied in the
literature, see for instance~\cite{BF10,BFS09,RSS09a,TE92,WSP06}.
These papers restrict the attention though to {\it discrete topologies}, where the users can only reside in a finite number of locations. While a discrete network structure is a reasonable assumption in
case of a relatively small number of long-lived sources, it is less
suitable in case of a relatively large number of short-lived flows.
The latter scenario is increasingly relevant as emerging wireless
networks support traffic generated by massive numbers of nodes which
each individually may only engage in sporadic transmission activity.
The continuous spatial setting also provides useful insights in the
scaling behavior of discrete topologies as the number of nodes
grows large.

Stability of queueing networks in {\it continuous space} is investigated for instance in~\cite{AL94,LU10,Robert10}.
These papers prove stability of networks in which {\it only one user is allowed to transmit} at a time.
In contrast, the present paper focuses on the more complex situation
of simultaneous transmissions as governed by a scheduling discipline. 
The model in~\cite{BF11} can be interpreted as a wireless network on a continuum of locations in which the users are scheduled on a network-wide first-come-first-served basis. In this paper we study random admissible-set scheduling and show that it utilizes the spectrum in a more efficient manner.

From a methodological perspective, the continuous spatial setting
involves major additional challenges compared to a discrete network
structure.
Since users reside in a continuum of locations, the evolution of
the system cannot be represented in terms of a Markov chain with
a finite state space, and we therefore introduce a measure-valued
process as a description of the system.
In order to prove stability, we identify a Lyapunov function which
has a negative drift for all but a `small' set of states,
so that the Markov chain is positive Harris recurrent.

To use scheduling algorithms for discrete networks in a continuous setting, one can divide the space into a finite number of regions and treat every region as a node of a discrete network. In this setting a subset of nodes is, what we call, {\it guaranteed} if any subset of corresponding particles is admissible, regardless of the exact locations of the particles within each of the regions. One can then use a throughput-optimal scheduling discipline such as MaxWeight~\cite{TE92,VBY10} to select among the guaranteed regions. In this paper we will show that random admissible-set scheduling achieves stability whenever stability could be achieved by any region-based scheduling discipline, and thus random admissible-set scheduling is throughput-optimal whenever a region-based scheduling discipline is.

The remainder of the paper is organized as follows.
In Section~\ref{modeld} we show how the evolution of the system
may be described in terms of a Markov chain with a measure-valued
state space.
The main stability result is presented in Section~\ref{mainr},
along with an interpretation and discussion of its ramifications.
In Section~\ref{stability} we provide the proof of our main result,
and in particular identify a Lyapunov function which has negative
drift for all but a `small' set of states, and plays a critical
role in the proof.
In Appendix~\ref{prelim} we recall various useful definitions
and collect some preliminaries that are needed in order to apply
the Foster-Lyapunov approach for our specific Markov chain. Finally, Appendix~\ref{proofdetailsRAS} contains some proofs that have been relegated from the main text.
A preliminary version of this paper with partial results for the symmetric case appeared in~\cite{BBL11RAS}. 

\section{Model description} \label{modeld}

Consider a compact space~$\comps\subset \mathbb{R}^d$ for some $d<\infty$. Denote by $A(t,B)$ the number of particles arriving during the $t$-th time slot in $B \subseteq \comps$. Particles arrive in batches, and the batch size has a general non-negative discrete distribution with mean $\beta$, independent of the sizes and locations of other batches. The number of batches that arrive during a time slot has a general non-negative discrete distribution with mean $\lambda$ and is independent of the number of batches that arrive in other time slots.
That is, the numbers $A(t,\comps)$, $t=1,2,\dots$, are i.i.d.~copies of a non-negative random variable~$A$ with $\expect{A} = \lambda \beta$. Further we assume that $\expectsmall{A\log (A)|A>0}<\infty$ and that $0<\pr{A>0}<1$. The locations of the arriving batches are independent and distributed according to some measure $\nu(\cdot)$ on $H$, with $\nu(H) = 1$. Thus the expected number of particles to arrive to a subspace $B \subseteq \comps$ in one time slot is given by $\expect{A} \nu(B)$. We denote the number of particles in the space~$\comps$ at the start of the $t$-th time slot by~$Y(t)$, with $Y(t)=(Y(t,B),B\subseteq \comps)$ and $Y(t,B)$ denoting the number of particles residing in the subspace~$B$ at the start of the $t$-th time slot.
The state space of this process is denoted by $\Psi$ and consists
of all finite counting measures on~$\comps$. So, when
$y\in\Psi$, $y(B)$ denotes the number of particles residing in~$B$. In particular, $y(\{x\})$ denotes the number of particles at $x \in \comps$.

\paragraph{Random admissible-set scheduling.}

At the start of every time slot an admissible subset of particles will be removed. Here, $z \in \Psi$ is called a subset of $y \in \Psi$ if $z(\{x\}) \leq y(\{x\})$ for all $x \in \comps$.
To decide whether a set is admissible we define a function $F: \Psi \to \{0,1\}$ and we call $y\in \Psi$ admissible if and only if $F(y)=1$.
The function $F$ follows from the characteristics of the application, e.g.~for wireless networks the function~$F(\cdot)$ follows from the interference constraints, so that an admissible set consists of users whose transmissions are all successful if only the users in that set are transmitting at the same time. We will assume that $F(y)=1$ only if $y(\{x\})\in\{0,1\}$ for all $x \in \comps$ and that $F(y)=1$ whenever $y(\comps)\leq 1$. That is, we assume that at most one of the particles located at any location $x\in \comps$ can be removed in a single time slot and that sets consisting of at most one particle are always admissible.

Let $\chi (y)$ be the set of all subsets of $y$, i.e.
\[
\chi (y) = \{z \in \Psi : z(\{x\}) \leq y(\{x\}), \forall x \in \comps\},
\]
and let $R(t,Y(t),B)$ be the number of particles removed from $B\subseteq \comps$ in the $t$-th time slot, given the configuration $Y(t)$. An admissible subset of particles is selected uniformly at random. Hence, given $Y(t)=y$, $R(t,y)=z$ with probability
\[
\frac{F(z) \prod\limits_{\substack{x\in \comps,z(\{x\})>0}} y(\{x\})}{\sum_{u\in\chi(y)} F(u) \prod\limits_{\substack{x\in \comps,u(\{x\})>0}} y(\{x\})},
\]
where $R(t,y)=(R(t,y,B),B \subseteq \comps)$. Note that for $z\not\in\chi(y)$ we have $F(z)=0$ or there exists an $x \in \comps$ such that $z(\{x\})=1$ and $y(\{x\})=0$, so this probability is always zero in this case. Here and in the remainder of this paper, the value of the empty product is defined as $1$.

\paragraph{System dynamics.}

The evolution of $Y(t,B)$ is then described by the recursion
\[
Y(t,B) = Y(t-1,B) + A(t-1,B) - R(t,Y(t^{-}),B).
\]
Further, $R(t,Y(t^{-}),B)$ depends on the number of particles just before the start of the $t$-th time slot, 
\[
Y(t^{-},B) = Y(t-1,B) + A(t-1,B).
\]
From this description it follows that $(Y(t))_{t\in \mathbb{N}_0}$ is a Markov chain. We will equip the state space~$\Psi$ of the Markov chain with the smallest $\sigma$-field $\mathcal{B}(\Psi)$ with respect to
which the map $y \rightarrow y(B)$ is measurable for any Borel set
$B \subseteq \comps$. That is, we equip $\Psi$ with the Borel
$\sigma$-field as we will prove in Lemma~\ref{countgen}.

\paragraph{Guaranteed sets.}

Let $\mathcal{P}=\{P_1,\dots,P_K\}$ be a partition of the space~$\comps$, i.e.~$\bigcup_{i=1}^K P_i = \comps$, $P_i \neq \emptyset$ for all $i$ and $P_i \cap P_j = \emptyset$ for all $i \neq j$.
Further, let $\Theta_{\mathcal{P}}=2^{\mathcal{K}}$ be the collection of all subsets of $\mathcal{K}=\{1,\dots,K\}$. We call a set $S \in \Theta_{\mathcal{P}}$ guaranteed if any subset of particles, with exactly one residing in each of the regions contained in~$\bigcup_{k \in S} \{P_k\}$, is admissible, regardless of the exact locations within each of the regions. We denote by $\Omega_{\mathcal{P}} \subseteq \Theta_{\mathcal{P}}$ the collection of all guaranteed sets for the partition $\mathcal{P}$. Finally, we denote by $A_k$ the number of arrivals in the $k$-th region, so that $\expect{A_k}=\lambda \beta \nu(P_k)$.

\section{Main result} \label{mainr}

The following theorem states the main result of this paper and is a shorter version of Theorem~\ref{mainthm}, which is proven at the end of Section~\ref{stability}.
\begin{thm} \label{mainresult}
Assume that there exists a partitioning $\mathcal{P}$ of the space $\comps$, with $\nu(P_k)>0$ for $k=1,\dots,K$, such that there exist constants $\alpha_S \geq 0$, for which $\sum_ {S\in \Omega_{\mathcal{P}}} \alpha_S = 1$ and $\expect{A_k} < \sum_ {S\in \Omega_{\mathcal{P}}} \alpha_S {\rm I}_{\{k \in S \}}$ for all $k=1,\dots,K<\infty$. Then, the Markov chain $(Y(t))_{t\in \mathbb{N}_0}$ is positive Harris recurrent.
\end{thm}
In words, Theorem~\ref{mainresult} states that random admissible-set scheduling achieves stability whenever there exists a partition of the space such that the vector of arrival rates in each region lies in the interior of the convex hull of the incidence vectors of the guaranteed sets. The same stability condition holds for some scheduling algorithms that select among the guaranteed sets and then arbitrarily remove one packet in each of the regions contained in the selected guaranteed set, and is in fact known to be necessary in that case. For example, considering the regions as the queues of the network and the total number of packets residing in a region as its queue size, the MaxWeight algorithm~\cite{TE92} can be used to select among the guaranteed sets and gives the same stability condition. 

Thus random admissible-set scheduling is superior to region-based scheduling, in the sense that it always achieves stability whenever that could be achieved by any region-based scheduling discipline. Also note that a region-based scheduling discipline would require explicit knowledge of the partition that provides stability for its implementation, whereas for random admissible-set scheduling the partition is only required to formulate the sufficient stability condition in Theorem~\ref{mainresult}. 

As region-based MaxWeight is throughput-optimal for a number of scenarios~\cite{VBY10}, we can conclude that random admissible-set scheduling is throughput-optimal for these scenarios as well. Furthermore, under suitable smoothness conditions it seems plausible that there always exists a partition of the space such that stability can be achieved using a region-based scheduling discipline, whenever that is feasible to do so at all, so that random admissible-set scheduling is throughput-optimal under these conditions. However, characterizing the general smoothness conditions and proving this claim is a nontrivial analytical problem, and is beyond the scope of this paper.

These throughput-optimality guarantees might at first seem somewhat surprising because random admissible-set scheduling does not explicitly consider the size, or the congestion level, of the selected sets. However, suppose that the number of the particles in all areas but one is fixed. Then an arbitrary admissible set becomes more likely to contain a particle from the area with a non-fixed number of particles as that area becomes more densely populated. Thus, the selected set with random admissible-set scheduling indirectly does depend on the spatial congestion. Furthermore, note that, in this sense, random admissible-set scheduling resembles the queue-based CSMA algorithm, which is, similar to the MaxWeight algorithm, throughput-optimal in discrete topologies~\cite{RSS09a}. Although these observations do not immediately explain why random admissible-set has the same stability guarantees, they provide a starting point for constructing the Lyapunov function we use in Section~\ref{stability} to prove Theorem~\ref{mainresult}. 

It is worth emphasizing that we consider a particle-based version of random admissible-set scheduling rather than a node-based incarnation, in the sense that the strategy selects among sets of particles rather than sets of nodes.
While this distinction is immaterial when the measure $\nu(\cdot)$ is an absolutely continuous density and all batches have size one, the issue does become relevant when the batches can have a size greater than one or the measure $\nu(\cdot)$ has mass in discrete points.
When the measure $\nu(\cdot)$ has mass in discrete points, it may readily be concluded that a node-based version of random admissible-set scheduling is not throughput-optimal, as the empty set will be scheduled in any time slot with non-vanishing probability. 
Interestingly, this observation contrasts with the fact that the node-based version of the MaxWeight scheduling strategy guarantees throughput-optimality when the measure $\nu(\cdot)$ is a purely discrete distribution, whereas a particle-based version may fail to do so~\cite{VBY10}.

Under certain symmetry assumptions it is easier to show that random admissible-set scheduling is throughput-optimal, as we will show next. Also we can state the stability condition explicitly in this case. Let $\mu=\max \{y(\comps):F(y)=1\}$ denote the maximum number of particles in an admissible set. An immediate consequence of Theorem~\ref{mainresult} is the following corollary.
\begin{cor} \label{cormainresult}
Assume that there exists a partitioning $\mathcal{P}$ of the space $\comps$ such that $K \bmod{\mu}=0$, $\nu(P_k)=1/K$, for all $k=1,\dots,K$, and  $F(y)=1$ for any $y\in\Psi$ with $y(P_{m+j})=1$, for all $j=1,\dots,\mu$, with $m<K$ such that $m \bmod{\mu}=0$. Then, the Markov chain $(Y(t))_{t\in \mathbb{N}_0}$ is positive Harris recurrent if $\lambda \beta < \mu$.
\end{cor}
\begin{proof}
Choose $\alpha_S=\frac{\mu}{K}$ when $S=\{m+1,\dots,m+\mu\}$, with $m<K$ such that $m \bmod{\mu}=0$, and $\alpha_S=0$ otherwise. Noting that sets of the form $\{m+1,\dots,m+\mu\}$, with $m<K$ such that $m \bmod{\mu}=0$, are guaranteed by assumption, the stated result follows from Theorem~\ref{mainresult}.
\end{proof}
The assumptions of Corollary~\ref{cormainresult} ensure that the space~$\comps$ can be partitioned using a finite number of equally-sized disjoint sets such that taking any combination of elements of maximum size ($\mu$) from certain sets will always give an admissible set. As these sets all contain particles with nonzero probability, this assumption ensures that there exists an admissible set of size $\mu$ in any time slot with nonzero probability. Furthermore, there are assumed to be $K/\mu$ disjoint sets with this property, i.e.~any particle is in at least one guaranteed set of maximum size. 

These assumptions are for example satisfied when the arrival density is uniform, i.e.~$\nu(\cdot)$ is the Lebesgue measure, and the function $F(\cdot)$ and space $\comps$ are such that $F(\cdot)$ is translation-invariant. An example of this is given below.

\begin{exmp} \label{ucircle}
Let $\comps$ be the unit circle, where we use the interval $[0,1)$ to denote points on the circle, and let $\nu(\cdot)$ be the Lebesgue measure. Consider the protocol model for interference in wireless networks~\cite{GK00}, so that particles can be removed simultaneously whenever the distance between these particles is at least $r$, i.e.~$F(y)=1$ if $y(\{x\})\in\{0,1\}$ for all $x \in [0,1)$ and $y(\{x\})y(\{w\})>0$ only if $D(x,w)\geq r$ for all $x\neq w \in [0,1)$, with $D(x,w)=\min(|x-w|,1-|x-w|)$. Take $K \geq 2 \mu / (1 - \mu r)$ and $K$ a multiple of $\mu=\lfloor 1/r \rfloor$. Further, for $i = 1, \dots, K$, take
\[
P_i=\left[ \frac{\lfloor \frac{i-1}{\mu} \rfloor + \frac{K}{\mu}((i-1) \bmod{\mu})}{K} , \frac{1+\lfloor \frac{i-1}{\mu} \rfloor + \frac{K}{\mu}((i-1) \bmod{\mu})}{K} \right).
\]
We then see that the assumptions of Corollary~\ref{cormainresult} are satisfied whenever~$1/r$ is non-integer. 

Note that the probability that a set of size~$1/r$ is removed is almost surely zero in any time slot if~$1/r$ is integer-valued. In that case we therefore adjust $F(\cdot)$ slightly and take $F(\cdot)$ as above, but with $F(y)=0$ whenever $y(\comps)=1/r$, so that $\mu=\lceil 1/r \rceil-1$. We then see the assumptions of Corollary~\ref{cormainresult} are satisfied with $K$ and the sets $P_i$ as defined above.

We can thus conclude that, for this model, random admissible-set scheduling achieves stability if $\lambda \beta < \lceil 1/r \rceil-1$. Hence, as $\lceil 1/r \rceil-1$ is the maximum number of packets that can be transmitted simultaneously, is follows that random admissible-set scheduling is throughput-optimal.
\end{exmp}

The result of Corollary~\ref{cormainresult} may be interpreted as follows.
Suppose that the total number of particles in the system is large.
Then there will be a large number of admissible sets of size~$\mu$,
assuming that the particles are sufficiently dispersed across the
network and not concentrated in a few dense areas.
In fact, the number of admissible sets of size~$\mu$ will be
overwhelmingly large compared to the number of admissible sets of
smaller size.
By virtue of random admissible-set scheduling, one of the
admissible sets of size~$\mu$ will then be selected with high probability.
Thus, the expected number of removed particles will exceed the
expected number of arriving particles, provided $\lambda \beta < \mu$,
implying a reduction in the expected number of particles in the system,
and preventing the number of particles from growing without bound.

While the spatial dispersion of particles under random admissible-set
scheduling is intuitively plausible under the assumptions of Corollary~\ref{cormainresult}, it is certainly not obvious.
This is perhaps best illustrated through the following example, that shows that the result of
Corollary~\ref{cormainresult} may not necessarily hold for seemingly
similar but subtly different scheduling disciplines and that the particles in the system are almost all concentrated in one dense area.

\begin{exmp} \label{maxwithprior}
Consider the situation of Example~\ref{ucircle}. Instead of selecting an admissible set at random, now consider the scheduling discipline that gives priority to particles that are closest, in anticlockwise direction, to a certain point~$\zeta$ on the circle. Obeying this priority rule, we select as many particles as possible to remove in a certain time slot. That is, the particle closest to the given point gets removed, the particle closest to the point and at least a distance~$r$ away from the first particle gets removed, and so on until no particle can be selected anymore. We call this service discipline maximal scheduling with priorities.

\begin{figure}
  \centering
  \input{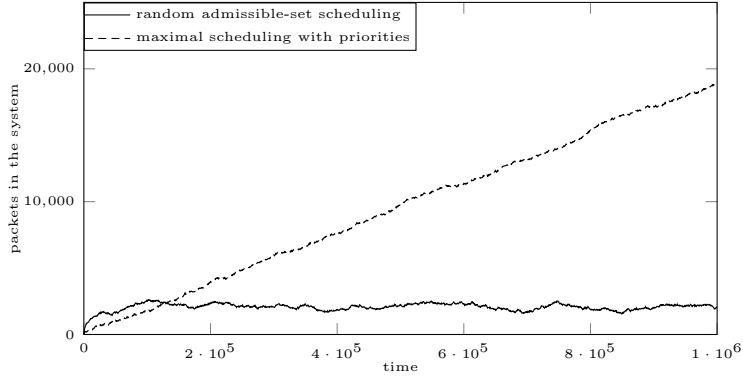}
  \caption{Random admissible-set scheduling and maximal scheduling with priorities for $\zeta=0.5$, $r=0.49$ and $\lambda=1.95$.}
  \label{stabcircle}
\end{figure}

Figure~\ref{stabcircle} shows a simulation result for both scheduling
disciplines with $\zeta=0.5$, $r=0.49$ and $\lambda=1.95$ starting from an empty
configuration and running for $10^6$ time slots. That is, the figure gives a realization of $Y(t,\comps)$ given that $Y(0,\comps)=0$ for $t=1,\dots,10^6$.
We see that at the start of the simulation the number of
particles in the system with random admissible-set scheduling grows faster than
the number of particles in the system with maximal scheduling with priorities.
This is because maximal scheduling with priorities always selects a subset
of maximal size obeying the priority rules, whereas random admissible-set
scheduling always selects admissible sets of a small size with a certain
probability, which gets lower as the number of particles in the network grows.
More importantly, after some time the number of particles in the system with
random admissible-set scheduling settles around an equilibrium value whereas the number of particles in the system with maximal scheduling with priorities keeps on growing linearly. This suggests that maximal scheduling with priorities is not stable while random admissible-set scheduling is stable for the chosen parameters. The latter will be proven in the next section.

\begin{figure}
  \centering
  \input{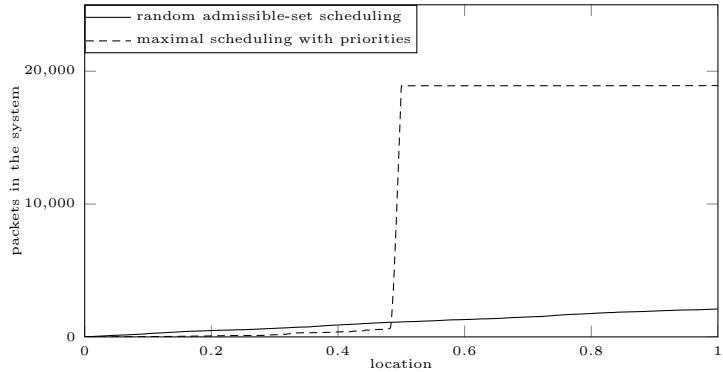}
  \caption{Terminal configuration of the simulation of Figure~\ref{stabcircle}.}
  \label{termconf}
\end{figure}

Figure~\ref{termconf} shows the terminal configuration of the
simulation of Figure~\ref{stabcircle}, i.e.~it gives a realization
of $Y(10^6,B)$ given that $Y(0,1)=0$ for $B=[0,s)$, $0<s\leq 1$, for both
scheduling disciplines.
For random admissible-set scheduling we see that the number of
particles in the interval $[0,s)$ is roughly linear in~$s$,
indicating that the particles are evenly spread out over the circle.
For maximal scheduling with priorities we observe that the number
of particles in $[0,s)$ slowly increases with~$s$ up to approximately
$s=0.48$, after which the number of particles in the system steeply
rises up to $s=0.5$.
For $s\geq 0.5=\zeta$ the number of particles in the interval $[0,s)$
is (almost) constant, implying that virtually no particles are
located in the interval $[0.5,1)$. Note that particles in the interval $[0.48,0.5)$ have the lowest priority and hence are, whenever they are allowed to, almost always removed simultaneously with other particles, as there are quite some particles in the system and outside this interval. However, the particles that are allowed to be removed simultaneously with particles in $[0.48,0.5)$ are also allowed to be removed simultaneously with some particles in $[0.5,0.52)$, which have the highest priority.
So we infer that the particles in this system are clustered in
$[0.48,0.5)$, and that too high a fraction of the time (larger then 0.02$\lambda$) no particle in this interval is removed, making the system unstable.
\end{exmp}

For the more general case considered in Theorem~\ref{mainresult} not all particles are necessarily in a guaranteed set of maximum size, which might result in an increase of the expected number of particles in the system when such particles are selected. Also, the particles might not be spread out, but be clustered in a few hot spots. Thus the above heuristic explanation of Corollary~\ref{cormainresult} does not directly extend to the more general case in Theorem~\ref{mainresult}. Nevertheless, random admissible-set scheduling guarantees stability when the condition in Theorem~\ref{mainresult} is fulfilled as we prove in the next section. 

\section{Proof} \label{stability}

The proof of Theorem~\ref{mainresult} relies on the Foster-Lyapunov
criteria and involves the identification of a function which has
negative drift for all but a small set of states.
Appendix~\ref{prelim} contains several useful definitions
and preliminaries that are needed to apply the Foster-Lyapunov
approach for our specific Markov chain.

We will from now on suppose that the assumptions of Theorem~\ref{mainresult} hold and we consider a partitioning $\mathcal{P}=\{P_1,\dots,P_K\}$ of the space~$\comps$, with $\nu(P_k)>0$ for $k=1,\dots,K$, with associated constants $\alpha_S \geq 0$, for which $\sum_ {S\in \Omega_{\mathcal{P}}} \alpha_S = 1$ and $\expect{A_k} < \sum_ {S\in \Omega_{\mathcal{P}}} \alpha_S {\rm I}_{\{k \in S \}}$ for all $k=1,\dots,K<\infty$. Further, we take $\epsilon>0$ such that $\expect{A_k} \leq (1-2\epsilon) \sum_{S \in \Omega_{\mathcal{P}}} \alpha_S {\rm I}_{\{k \in S\}}$.
To simplify the notation, we will use the notation $\Theta$ for all subsets of $\{1,\dots,K\}$ and $\Omega$ for all guaranteed sets for the partition~$\mathcal{P}$.

Let $x_k(y)$ be the number of particles residing in the $k$-th region, $P_k$, given configuration $y \in \Psi$. We will denote by $\Theta(y)$ the subsets containing particles, i.e.~$\Theta(y)=\{S\in\Theta :x_k(y)\geq 1,\forall k \in S\}$, and by $\Omega(y)$ the guaranteed sets containing particles,~$\Omega(y)=\{S\in\Omega :x_k(y)\geq 1,\forall k \in S\}$. Further, denote by $q_S(y)$ the probability that the particles that get removed belong to the subset of regions $S \in \Theta$ given that the system is in state~$y$, with $p_k(y) = \sum_{S \in \Theta: S \ni k} q_S(y)$ the probability that a particle in the $k$-th region gets removed given configuration $y \in \Psi$.

For $S \in \Theta$, denote
\[
w_S(y) = \prod\limits_{k \in S: x_k(y) \geq 1} x_k(y)
\]
and define
\[
w(y) = \max\limits_{S \in \Omega (y)} w_S(y).
\]
Further define for any $\delta>0$
\[
B(\delta)=\left\{y\in\Psi : w(y)\geq\left(\frac{2|\Theta|}{\delta}\right)^{2/\delta}\right\}.
\]

In the next lemma we will show that the expected value of $\sum_{k \in S} \log(x_k)$, for the set of particles~$S$ selected with random admissible-set scheduling, is at least close to the maximum possible value over all guaranteed sets for all states $y\in B(\delta)$, which is in fact the guaranteed set that would be selected if the weighted MaxWeight algorithm~\cite{ESP05,SW06} would be used to select among the guaranteed sets.

\begin{lem} \label{boundedw}
For all states $y\in B(\delta)$ we have,
\[
\sum_{S \in \Theta(y)} q_S(y) \log(w_S(y)) \geq (1 - \delta) \log(w(y)).
\]
\end{lem}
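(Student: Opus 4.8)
The plan is to exploit the uniform nature of the scheduling: since an admissible set is drawn uniformly (up to the location-multiplicity weights $\prod_x y(\{x\})$), the selection probability $q_S(y)$ is proportional to the number of admissible particle-configurations whose occupied regions are exactly $S$. Writing $N_S$ for this (weighted) count, so that $q_S(y) = N_S/\sum_{S' \in \Omega(y)} N_{S'}$, the first step is to bound $N_S$. Choosing one particle from each region of $S$ can be done in at most $w_S(y) = \prod_{k \in S} x_k(y)$ ways, and not all such choices need be admissible, so $N_S \le w_S(y)$ for every $S$. The key observation is that for a maximizing guaranteed set $S^\star \in \Theta(y)$ with $w_{S^\star}(y) = w(y)$, every one of the $w(y)$ choices is admissible by the very definition of $\Theta$, so $N_{S^\star} = w(y)$ and hence $\sum_{S'} N_{S'} \ge w(y)$. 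Combining these yields the central estimate
\[
q_S(y) \le \frac{w_S(y)}{w(y)} \qquad \text{for every } S \in \Omega(y).
\]

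Second, I would split $\Omega(y)$ into ``large'' sets, those with $w_S(y) \ge w(y)^{1-\epsilon/2}$, and ``small'' sets. For any small set the central estimate gives $q_S(y) < w(y)^{-\epsilon/2}$, and since there are at most $|\Omega|$ sets in total, the probability of selecting a small set is at most $|\Omega|\, w(y)^{-\epsilon/2}$. Because $x_k(y) \ge 1$ on $\Omega(y)$ we have $\log(w_S(y)) \ge 0$ throughout, so discarding the small sets only decreases the left-hand side; retaining the large sets, each of which satisfies $\log(w_S(y)) \ge (1-\epsilon/2)\log(w(y))$, gives
\[
\sum_{S \in \Omega(y)} q_S(y) \log(w_S(y)) \ge \left(1 - \tfrac{\epsilon}{2}\right) \log(w(y)) \left(1 - |\Omega|\, w(y)^{-\epsilon/2}\right).
\]

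Third, the defining inequality of $B(\epsilon)$, namely $w(y) \ge (2|\Omega|/\epsilon)^{2/\epsilon}$, is exactly what makes $w(y)^{\epsilon/2} \ge 2|\Omega|/\epsilon$, hence $|\Omega|\, w(y)^{-\epsilon/2} \le \epsilon/2$. Substituting this into the previous display bounds the right-hand side below by $(1-\epsilon/2)^2 \log(w(y))$, and since $(1-\epsilon/2)^2 = 1 - \epsilon + \epsilon^2/4 \ge 1-\epsilon$ and $\log(w(y)) \ge 0$, the claim follows.

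I expect the main obstacle to be the first step: justifying $q_S(y) \le w_S(y)/w(y)$ rigorously, in particular arguing that the uniform selection over admissible particle-sets reduces to the clean count $N_S \le w_S(y)$ with equality $N_{S^\star} = w(y)$ for the guaranteed maximizer. This is where the ``guaranteed'' structure of $\Theta$ and Property~\ref{epsn} of $F$ are essential---without a set in $\Theta(y)$ of size $w(y)$ sitting in the denominator, no lower bound on the normalizing constant would be available. The remaining steps are a threshold argument together with a direct substitution of the $B(\epsilon)$ bound, and should be routine.
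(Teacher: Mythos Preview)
Your proposal is correct and follows essentially the same approach as the paper: your count $N_S$ is the paper's $v_S(y)$, your large/small split is the paper's set $\Upsilon(y)=\{S\in\Omega(y):\log(w_S(y))\ge(1-\tfrac{\epsilon}{2})\log(w(y))\}$, and the final chain $(1-\tfrac{\epsilon}{2})^2\log(w(y))\ge(1-\epsilon)\log(w(y))$ is identical. The only cosmetic difference is that the paper bounds $\sum_{S\notin\Upsilon(y)}q_S(y)$ in one shot as a ratio of sums, whereas you bound each $q_S(y)$ individually by $w_S(y)/w(y)$ and then sum; both yield the same $|\Omega|\,w(y)^{-\epsilon/2}$ estimate.
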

\begin{proof}
The proof proceeds along similar lines as in~\cite{BF10,NTS10}.

Define
\[
\Upsilon(y) =
\{S \in \Theta(y): \log(w_S(y)) \geq (1 - \frac{\delta}{2}) \log(w(y))\}.
\]
Then,
\beeq \label{lb1}
\sum_{S \in \Theta(y)} q_S(y) \log(w_S(y)) \geq
(1 - \frac{\delta}{2}) \log(w(y)) \sum_{S \in \Upsilon(y)} q_S(y).
\eneq
For any $S \in \Theta$, let $v_S(y)$ be the number of admissible
subsets of particles of size $|S|$ with exactly one residing in each of the
regions contained in~$S$, with the convention that
$v_{\emptyset}(y)=1$ for all $y \in \Psi$ as the empty set is always admissible.
Because every admissible subset can have
at most one particle residing in each region, there is exactly one
$S \in \Theta$ for which this subset is counted in $v_S(y)$. Thus
the total number of admissible subsets of particles is given by
$\sum_{T \in \Theta} v_T(y)$ and, as an admissible subset of
particles is selected uniformly at random,
\begin{equation}
\label{defqS}
q_S(y) = \frac{v_S(y)}{\sum_{T \in \Theta} v_T(y)}=\frac{v_S(y)}{\sum_{T \in \Theta(y)} v_T(y)}.
\end{equation}
Further observe that $v_S(y) \leq w_S(y)$ for all $S \in \Theta (y)$, with
equality for all $S \in \Omega(y)$.

Thus,
\begin{equation}
\label{qSinUpsbound}
\sum_{S \not\in \Upsilon(y)} q_S(y) =
\frac{\sum_{S \not\in \Upsilon(y)} v_S(y)}{\sum_{S \in \Theta(y)} v_S(y)} \leq
\frac{\sum_{S \not\in \Upsilon(y)} w_S(y)}
{\sum_{S \in \Omega(y)} w_S(y)} \leq
\frac{|\Theta| w(y)^{1 - \frac{\delta}{2}}}{w(y)} =
|\Theta| w(y)^{- \frac{\delta}{2}}.
\end{equation}
The latter quantity is less than $\frac{\delta}{2}$ for all
states~$y\in B(\delta)$, and thus
\beeq \label{lb2}
\sum_{S \in \Upsilon(y)} q_S(y) \geq 1 - \frac{\delta}{2}.
\eneq
Combining the lower bounds (\ref{lb1}) and (\ref{lb2}), we obtain
\[
\sum_{S \in \Theta(y)} q_S(y) \log(w_S(y)) \geq
(1 - \frac{\delta}{2}) \log(w(y)) (1 - \frac{\delta}{2}) \geq
(1 - \delta) \log(w(y))
\]
for all states~$y\in B(\delta)$.
\end{proof}

Define the nonnegative function $V: \Psi \to {\mathbb R}$ by
\[
V(y) = \sum_{k: x_k(y) \geq 1} x_k(y) \log(x_k(y)).
\]
and the function $G: \Psi \to {\mathbb R}$ by
\[
G(y) = \sum_{k: x_k(y) \geq 1} \log(x_k(y)) \Big[\expect{A_k} - p_k(y)\Big].
\]
Note that
\[
V(y) = \sum_{k=1}^{K} x_k(y) \log(\max(x_k(y),1))
\]
and
\[
V(y) = \sum_{k: x_k(y) \geq 2} x_k(y) \log(x_k(y)).
\]
Further observe that the function $V(\cdot)$ only depends on~$y$
through the values of $x_k(y)$.
However, the $x_k(y)$'s do not constitute a Markov chain, and hence
we need to treat $V(\cdot)$ as a function of the full state
description~$y$ in order for the Foster-Lyapunov approach to apply.

We now first find the relation between the drift of $V(y)$ and $G(y)$, where the drift of $V(y)$ is defined by
\[
\Delta V(y) = \expect{V(Y(t + 1)) | Y(t) = y} - V(y).
\]
The proof of this lemma is deferred to Appendix~\ref{proofdetailsRAS}.

\begin{lem} \label{bounded}
$\Delta V(y)=G(y)+G_2(y)$, with $G_2(y)$ a bounded function.
\end{lem}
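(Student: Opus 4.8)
**

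The goal is to compute the one-step drift of $V$ and show it differs from $G(y)$ only by a bounded remainder $G_2(y)$. Let me set up the calculation.

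Starting point:
$$\Delta V(y) = \mathbb{E}[V(Y(t+1)) \mid Y(t)=y] - V(y)$$

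where $x_k(Y(t+1)) = x_k(y) + A_k - R_k$, with $A_k$ arrivals and $R_k \in \{0,1\}$ removals (by Property 2, at most one per region).

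**Decompose the drift region by region.** Since $V(y) = \sum_k x_k \log(\max(x_k,1))$ depends only on the $x_k$, I can write:
$$\Delta V(y) = \sum_{k=1}^{K} \mathbb{E}\big[ f(x_k + A_k - R_k) - f(x_k) \big]$$
where $f(n) = n\log(\max(n,1))$. The arrivals $A_k$ and removals $R_k$ are dependent on the configuration, so I'd condition appropriately.

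**Separate arrivals and removals.** The key is that $R_k$ depends on $y$ (through the random scheduling), while $A_k$ is independent of $y$. I'd split each term:
- Effect of removal: $f(x_k - R_k) - f(x_k)$, where $\mathbb{E}[R_k] = p_k(y)$
- Effect of arrival: contributes roughly $\mathbb{E}[A_k] \cdot (\text{marginal of } f)$

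**Target the $G(y)$ structure.** The function $G(y) = \sum_{k: x_k \geq 1} \log(x_k)[\mathbb{E}[A_k] - p_k(y)]$ has $\log(x_k)$ as the coefficient. This is exactly the discrete derivative of $f$: note that $f(n) - f(n-1) = n\log n - (n-1)\log(n-1) \approx \log n + 1$ for large $n$.

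So:
$$f(x_k) - f(x_k - 1) = \log(x_k) + O(1)$$
and similarly for arrivals.

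**Extract the remainder.** Writing the drift as:
$$\Delta V(y) = \sum_{k: x_k \geq 1} \log(x_k)[\mathbb{E}[A_k] - p_k(y)] + G_2(y)$$

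I need to verify $G_2(y)$ collects:
1. The $O(1)$ constants from the discrete-derivative approximation
2. Higher-order terms from multi-particle arrivals (since batches can have size $> 1$)
3. Boundary terms where $x_k \in \{0,1\}$

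**Boundedness of $G_2$.** This is the crucial step. The remainder per region involves:
- Terms like $\mathbb{E}[A_k]$ times bounded quantities — summable since there are $K$ regions
- For arrivals: need control via the convexity of $f$ and the $\mathbb{E}[A\log A] < \infty$ assumption (stated in the model) to handle batch sizes. The function $f(n+a) - f(n) - a\log(n)$ is bounded when weighted by the arrival distribution, using $\mathbb{E}[A\log A \mid A>0] < \infty$.

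I would estimate $|G_2(y)| \leq C$ uniformly in $y$ by bounding each region's contribution by a constant depending on $\lambda, \beta, K$ and the moment condition, then summing over the finitely many $K$ regions.

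**Main obstacle.** The hard part is controlling the arrival terms when batch sizes exceed one. The naive linearization $f(x_k + A_k) - f(x_k) \approx A_k \log(x_k)$ has an error that could grow with $x_k$ unless carefully bounded. The convexity of $f$ gives $f(x_k + a) - f(x_k) \leq a \log(x_k + a) \leq a\log(x_k) + a\log(1 + a/x_k)$, and the correction $a\log(1+a/x_k)$ must be shown integrable against the batch distribution — this is exactly where $\mathbb{E}[A\log A] < \infty$ enters.
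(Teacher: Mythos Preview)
Your proposal is correct and follows essentially the same route as the paper: expand $\Delta V$ region by region, write $\log(x_k+A_k-R_k)=\log(x_k)+\log(1+(A_k-R_k)/x_k)$ to peel off the $\log(x_k)(\mathbb{E}[A_k]-p_k(y))$ term that forms $G(y)$, treat the boundary cases $x_k\in\{0,1\}$ separately, and then bound the remainder uniformly using the assumption $\mathbb{E}[A\log A\mid A>0]<\infty$. One small slip: convexity of $f$ gives a \emph{lower} bound on $f(x_k+a)-f(x_k)$, not an upper bound; the correct upper estimate comes from the exact identity $f(x_k+a)-f(x_k)=a\log(x_k)+(x_k+a)\log(1+a/x_k)$, whose second term is what the paper (and you, implicitly) bound via the $A\log A$ moment.
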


We proceed by finding an upper bound on $G(y)$ for all states $y \in B(\epsilon)$.

\begin{lem} \label{upG}
For all states $y \in B(\epsilon)$,
\[
G(y) \leq - \epsilon \sum_{k: x_k(y) \geq 1} \expect{A_k} \log(x_k(y)) .
\]
\end{lem}
\begin{proof}
We have by assumption
\begin{equation}
\label{firstboundG}
G(y) \leq \sum_{k: x_k(y) \geq 1} \log(x_k(y)) \Big[(1-2\epsilon) \sum_{S \in \Omega} \alpha_S {\rm I}_{\{k \in S\}} - p_k(y)\Big].
\end{equation}
Now note that, for all states $y \in B(\epsilon)$, we may write
\bearno
\sum_{k: x_k(y) \geq 1} \log(x_k(y)) p_k(y)
&=&\sum_{k: x_k(y) \geq 1} \log(x_k(y)) \sum_{S \in \Theta: S \ni k} q_S(y) \\
&=&\sum_{S \in \Theta} q_S(y) \sum_{k \in S: x_k(y) \geq 1} \log(x_k(y)) \\
&=&\sum_{S \in \Theta(y)} q_S(y) \log(w_S(y)).
\enarno
Likewise, we may write
\bearno
\sum_{k: x_k(y) \geq 1} \log(x_k(y)) \sum_{S \in \Omega} \alpha_S {\rm I}_{\{k \in S\}}
&=& \sum_{S \in \Omega} \alpha_S \sum_{k \in S: x_k(y) \geq 1} \log(x_k(y)) \\
&=& \sum_{S \in \Omega(y)} \alpha_S \log(w_S(y)).
\enarno
Substitution of these two equalities in~\eqref{firstboundG} gives
\bearno
G(y)
&=&- \epsilon \sum_{k: x_k(y) \geq 1} \log(x_k(y)) \sum_{S \in \Omega} \alpha_S {\rm I}_{\{k \in S\}} \\
&+& (1 - \epsilon) \sum_{S \in \Omega(y)} \alpha_S \log(w_{S}(y)) - \sum_{S \in \Theta(y)} q_S(y) \log(w_S(y)).
\enarno
Then, using Lemma~\ref{boundedw} and noting that $w_{S}(y) \leq w(y)$, yields
\bearno
G(y)
&\leq&
- \epsilon \sum_{k: x_k(y) \geq 1} \log(x_k(y)) \expect{A_k} + (1 - \epsilon)
\Big[\sum_{S \in \Omega(y)} \alpha_S \log(w_{S}(y)) - \log(w(y))\Big] \\
&\leq&
- \epsilon \sum_{k: x_k(y) \geq 1} \log(x_k(y)) \expect{A_k},
\enarno
for all states~$y\in B(\epsilon)$.
\end{proof}

By Lemma~\ref{bounded} we know that $\Delta V(y)\leq G(y)+G_2^{\max}$, where $G_2^{\max} = \sup_{y \in \Psi} G_2(y) < \infty$. Now consider the set $C$ where the drift
of $V(y)$ might be bigger than $-1$, i.e.~consider
\[
C=\{y\in\Psi : G(y) \geq -G_2^{\max} - 1\}.
\]
The next lemma, whose proof can be found in Appendix~\ref{proofdetailsRAS}, shows that this set is {\it small} (see Definition~\ref{defsmall}).

\begin{lem} \label{small}
The set $C$ is small.
\end{lem}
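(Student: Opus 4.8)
The plan is to prove Lemma~\ref{small} in two stages: first I would show that $C$ contains only configurations with a bounded number of particles, and then that any such bounded collection of configurations is small in the sense of Definition~\ref{defsmall}.

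For the first stage I would partition $C$ according to membership in $B(\epsilon)$, with $\epsilon=\frac12(1-\frac{\lambda\beta}{\mu})$. On $C\cap B(\epsilon)$, Lemma~\ref{upG} gives $-\epsilon\mu\sum_{k:x_k(y)\ge1}\nu(P_k)\log(x_k(y))\ge G(y)\ge -G_2^{\max}-1$; since $\nu(P_k)=\frac1K$ and each $\log(x_k(y))\ge0$, this forces $\sum_k\log(x_k(y))\le \frac{K(G_2^{\max}+1)}{\epsilon\mu}$, so that every $x_k(y)$, and hence $y(H)$, is bounded by a finite constant. On $C\cap B(\epsilon)^{c}$ the definition of $B(\epsilon)$ gives directly $w(y)<(2|\Omega|/\epsilon)^{2/\epsilon}=:W$; since each guaranteed block satisfies $S_i(y)\in\Theta(y)$ we have $\prod_{k\in S_i(y)}x_k(y)=w_{S_i(y)}(y)\le w(y)<W$, and because a product of integer factors each at least $2$ dominates their sum while at most $\mu$ factors per block equal $1$, the particle count in each of the $K/\mu$ blocks, and therefore $y(H)$, is again bounded. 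Hence $C\subseteq\{y\in\Psi:y(H)\le N_0\}$ for some finite $N_0$.

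For the second stage I would verify the minorization condition defining a small set. Because locations are continuous, no non-empty configuration is visited with positive probability, so the only viable nontrivial minorizing measure is a multiple of the atom $\delta_{y_0}$ at the empty configuration. I would therefore show that there exist $m\in\mathbb N$ and $\rho>0$, depending only on $N_0$, with $P^{m}(y,\{y_0\})\ge\rho$ for every $y$ with $y(H)\le N_0$; then $\phi:=\rho\,\delta_{y_0}$ is nontrivial and satisfies $P^{m}(y,\cdot)\ge\phi(\cdot)$ for all $y\in C$, which is exactly smallness. To build the required path to $y_0$ I would concatenate at most $N_0$ ``good'' slots, in each of which no particle arrives, an event of positive probability $\pr{A=0}$, and the randomized scheduler selects a non-empty admissible set, removing at least one particle; the guaranteed blocks $S_i$ ensure a non-empty admissible set is available and hence selected with probability bounded below, so that $\rho$ can be taken of the form $(\pr{A=0}\,c)^{N_0}$.

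The main obstacle is this second stage. The continuum of locations rules out minorizing by anything but the single atom $y_0$, so the argument must supply a genuinely uniform-over-$C$ lower bound on emptying the system within a fixed number of steps. This forces one to control the uniform random scheduler precisely---establishing that a non-empty admissible set is chosen with probability bounded away from zero as long as particles remain, which rests on the admissibility of the guaranteed blocks (and, for the models of interest, of single particles)---and to exploit the positive probability of a no-arrival slot. The remaining work is to check measurability of the events involved and to match them to the exact formulation of Definition~\ref{defsmall}.
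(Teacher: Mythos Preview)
Your Stage~1 is essentially the paper's argument. On $C\cap B(\epsilon)$ you invoke Lemma~\ref{upG} exactly as the paper does; on $B(\epsilon)^c$ the paper is slightly more direct, observing that singletons $\{k\}$ lie in $\Theta$ so that $w(y)\ge x_k(y)$ for every $k$, whence $x_k(y)<(2|\Omega|/\epsilon)^{2/\epsilon}$ immediately, without your product-versus-sum detour through the blocks $S_i(y)$. Either route places $C$ inside a level set $L_m=\{y\in\Psi:y(H)\le m\}$, and the paper then simply quotes Lemma~\ref{irapsm}, which has already shown every such level set to be small.

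Your Stage~2 re-derives that last step, and there your sketch has a genuine gap: you assume $\pr{A=0}>0$, but the model does not guarantee this. The only standing hypothesis on arrivals is that batches have size at most one with positive probability; the number of batches per slot is an arbitrary nonnegative integer law with mean $\lambda$, so $A\ge 1$ almost surely is perfectly possible, in which case your minorization constant $\rho=(\pr{A=0}\,c)^{N_0}$ vanishes and the argument collapses. Lemma~\ref{irapsm} treats this case separately: when $\pr{A=0}=0$ one has $\beta\ge 1$ and hence $\lambda<\mu$, so $\pr{A=j^*}>0$ for some $j^*\le\mu-1$; the paper then exhibits a positive-probability scenario in which the $j^*$ newly arriving particles land in regions of the guaranteed block containing an existing particle, so that all $j^*+1\le\mu$ of them form an admissible set and can be removed together in that slot, yielding a net decrease. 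You should either cite Lemma~\ref{irapsm} directly or incorporate this second construction.
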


Having established these lemmas, we are now in the position to prove stability of our system.
\begin{thm} \label{mainthm}
The Markov chain $(Y(t))_{t\in \mathbb{N}_0}$
is positive Harris recurrent with invariant probability measure $\pi$ and
\[
\pi(f)=\int \pi (dx)f(x)<\infty,
\]
where
\[
f(y) =
\left\{\begin{array}{ll}
-G(y)-G_2^{\max}, & y \not\in C,\\
1 & y \in C.
\end{array}\right.
\]
Moreover,
\[
\lim_{t\rightarrow \infty} \expect{g(Y(t))|Y(0)=y} = \int \pi (dx)g(x),\quad \forall y \in \Psi,
\]
for any function~$g$ satisfying $|g(x)|\leq c (f(x)+1)$ for all~$x$ and some $c<\infty$.
\end{thm}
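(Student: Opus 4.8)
The plan is to fold Lemmas~\ref{bounded}, \ref{upG} and~\ref{small} into the single Foster--Lyapunov drift inequality
\[
\Delta V(y) \leq -f(y) + b\,\indi{y\in C}, \qquad y\in\Psi,
\]
for a finite constant $b$ and the rate function $f$ of the statement, and then to invoke the general Foster--Lyapunov / $f$-norm ergodic theorem recalled in Appendix~\ref{prelim}. Once this one inequality is in place, positive Harris recurrence, the bound $\pi(f)<\infty$, and the asserted convergence of $\expectsmall{g(Y(t))\mid Y(0)=y}$ all drop out simultaneously, so the entire proof reduces to verifying the drift condition together with the regularity hypotheses ($\psi$-irreducibility, aperiodicity, smallness of $C$) on which that theorem rests.

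First I would record, from Lemma~\ref{bounded}, that $\Delta V(y) \leq G(y) + G_2^{\max}$ for every $y$, with $G_2^{\max}=\sup_{y\in\Psi}G_2(y)<\infty$. I would then check $f\geq 1$: by the very definition of $C$, any $y\notin C$ satisfies $G(y) < -G_2^{\max}-1$, so $f(y)=-G(y)-G_2^{\max}>1$, while $f\equiv 1$ on $C$. For $y\notin C$ the bound on $\Delta V$ then reads $\Delta V(y)\leq G(y)+G_2^{\max}=-f(y)$, which is exactly the required inequality since $\indi{y\in C}=0$ there.

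The one point needing a little care is the estimate on $C$ itself, where $f\equiv 1$ but $\Delta V$ need not be negative. Here I would use the conclusion of Lemma~\ref{small} that $C\subseteq\{y:x_k(y)\leq M,\ \forall k\in\mathcal{K}\}$, so that every $x_k(y)$ is bounded on $C$; since $\log(x_k(y))\in[0,\log M]$ and $\expectsmall{A_k}-p_k(y)\leq\expectsmall{A_k}=\lambda\beta/K$, the function $G$ is bounded above on $C$, say $G(y)\leq G_C^{\max}<\infty$. Setting $b=G_C^{\max}+G_2^{\max}+1$, for $y\in C$ we obtain $\Delta V(y)\leq G_C^{\max}+G_2^{\max}=-1+b=-f(y)+b$, which completes the drift inequality on all of $\Psi$.

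It then remains to feed this into the general machinery. The function $V$ is finite at every $y\in\Psi$ (finite counting measures), and $\expectsmall{V(Y(t+1))\mid Y(t)=y}<\infty$ by the estimate~(\ref{boundedlog}) used in Lemma~\ref{bounded}, so $\Delta V$ is well defined throughout; $C$ is small by Lemma~\ref{small}. Combined with $\psi$-irreducibility and aperiodicity of $(Y(t))$, established in Appendix~\ref{prelim}, the displayed inequality is precisely the standard drift condition (usually written (V3)), which yields positive Harris recurrence with a unique invariant probability measure $\pi$, the bound $\pi(f)<\infty$, and the $f$-norm convergence $\sup_{|g|\leq f}\bigl|\expectsmall{g(Y(t))\mid Y(0)=y}-\pi(g)\bigr|\to 0$ for every $y$ with $V(y)<\infty$, hence for every $y\in\Psi$. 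Finally, since $f\geq 1$ gives $f+1\leq 2f$, any $g$ with $|g|\leq c(f+1)$ satisfies $|g|\leq 2cf$, so rescaling specializes the $f$-norm convergence to $\expectsmall{g(Y(t))\mid Y(0)=y}\to\pi(g)$ for all $y$, as claimed. I expect the genuinely delicate ingredient to be not this assembly but the verification of $\psi$-irreducibility and aperiodicity for the measure-valued chain, since the continuum of locations rules out a naive finite-state argument and one must instead exploit that the arrival law charges single-particle and empty configurations.
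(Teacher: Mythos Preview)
Your proposal is correct and follows essentially the same approach as the paper: both combine Lemma~\ref{bounded} with the definition of $C$ to obtain the drift inequality outside $C$, bound $\Delta V$ on $C$ by a finite constant to produce $b$, and then appeal to Theorem~\ref{thmMT} together with Lemma~\ref{irapsm} and Lemma~\ref{small}. Your version is slightly more explicit in justifying that $\sup_{y\in C}\Delta V(y)<\infty$ (via the inclusion $C\subseteq\{x_k\leq M\}$ from Lemma~\ref{small}), whereas the paper simply takes $b\geq 1+\sup_{y\in C}\Delta V(y)$ without spelling this out; and your final rescaling step ($f+1\leq 2f$) is not needed since Theorem~\ref{thmMT} is already stated for $|g|\leq c(f+1)$.
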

\begin{proof}
In Lemma~\ref{irapsm} we have proven that our Markov chain
satisfies the irreducibility and aperiodicity properties
of Theorem~\ref{thmMT}. Further, we have proven in Lemma~\ref{small}
that the set $C$ is small. Thus, as $f\geq 1$ by construction
and $V$ is nonnegative and finite everywhere, we need to show that
\beeq \label{drift2}
\Delta V(y)\leq -f(y) + b{\rm I}_C(y),\quad \forall y \in \Psi,
\eneq
for some constant $b\in \mathbb{R}$, in order to prove our claim.

For $y \not\in C$ we get
\[
\Delta V(y)\leq G(y)+G_2^{\max},
\]
which holds as we have shown in Lemma~\ref{bounded}. Further, for $y \in C$ we get,
\[
\Delta V(y)\leq -1+b.
\]
We therefore take $b\geq 1+\sup_{y\in C} \Delta V(y)$, so
that the inequality holds by construction. Hence we have shown that~\eqref{drift2} holds for all $y\in \Psi$,
proving our claim.
\end{proof}

\begin{rem}
Lemma~\ref{irapsm} shows that our Markov chain to be $\varphi$-irreducible,
where $\varphi$ is the Dirac measure on $\Psi$ assigning unit mass to the
empty configuration $y_0$. Hence, by Definition~\ref{psiirr},
$\psi(\{y_0\})>0$. Theorem~\ref{mainresult} then follows by definition
of a Harris recurrent chain, see Definition~\ref{phc}.
\end{rem}

\begin{rem}\label{remarkRAS}
It is not strictly necessary to select uniformly at random among the admissible subsets for the proof arguments to hold. Consider for example `weighted' random admissible-set scheduling that gives weight $f(S,y)$ to the region $S$, so that
\[
q_S(y) =\frac{f(S,y) v_S(y)}{f(S,y) \sum_{T \in \Theta(y)} v_T(y)},
\]
with $f(S,y)\equiv 1$ for regular random admissible-set scheduling. It can readily be verified that all proof arguments hold for this scheduling discipline as well whenever $0<a\leq f(S,y) \leq b <\infty$, so that it has the same sufficient stability condition.
\end{rem}

\begin{rem}
The Foster-Lyapunov approach may also be leveraged to derive an
upper bound for the expected value of functions of the total number
of particles in the system. Specifically, define $J(y) = \sum_{k: x_k(y) \geq 1} \log(x_k(y))$,
and denote $G_1^{\max} = \sup_{y \in B(\epsilon)^c} (G(y)+J(y))<\infty$.
By virtue of Lemma~\ref{upG} we then have $G(y) \leq G_1^{\max} - \epsilon\gamma J(y)$, with $\gamma = \min_{k\in\mathcal{K}} \expect{A_k}>0$.
Using Lemma~\ref{bounded} and taking expectations yields
\[
\expect{V(Y(t + 1))} - \expect{V(Y(t))} \leq
- \epsilon\gamma \expect{J(Y(t))} + G_1^{\max} + G_2^{\max},
\]
for all $t = 1, 2, \dots$.

Summing over $t = 1, \dots, T$, we obtain
\[
\epsilon\gamma \sum_{t = 1}^{T} \expect{J(Y(t))} \leq
\expect{V(Y(1))} + T (G_1^{\max} + G_2^{\max}),
\]
and thus
\[
\limsup_{T \to \infty} \frac{\epsilon \gamma}{T}
\sum_{t = 1}^{T} \expect{J(Y(t))} \leq G_1^{\max} + G_2^{\max}.
\]
\end{rem}

\section{Concluding remarks} \label{conclusion}

We examined the stability of wireless networks whose users are distributed over a compact space. A subset of users is called admissible when their simultaneous activity obeys the prevailing interference constraints and, in each time slot, an admissible subset of users is selected uniformly at random to transmit one packet. We showed that this random admissible-set scheduling mechanism is superior to region-based scheduling, in the sense that it always achieves stability whenever that could be achieved by any region-based scheduling algorithm, and hence is throughput-optimal in a broad range of scenarios.

Although it is not required to divide the space into regions for random admissible-set scheduling, a partitioning of the space is used to formulate the general sufficient stability criterion. For symmetric scenarios we showed that the stability criterion can be formulated in an explicit manner, only requiring the system parameters. An interesting topic for further research is to formulate the general stability criterion in a more explicit manner as well.

With random admissible-set scheduling one has to find all admissible sets in each time slot, which can be a computationally complex task, especially when the number of particles in the system is very large and admissible sets can consist of many particles. For practical purposes it would thus be essential to find a distributed way to select one admissible set with a certain probability that fulfills the technical condition stated in Remark~\ref{remarkRAS}. A starting point in that direction might be the observation that random admissible-set scheduling resembles the queue-based CSMA algorithm.

\section{Acknowledgments}

This work was supported by Microsoft Research through its PhD Scholarship Programme, by the European Research Council (ERC) and by the Netherlands Organisation for Scientific Research (NWO).
%This work was supported by Microsoft Research through its PhD Scholarship Programme, by the European Research Council (ERC) and by the Netherlands Organisation for Scientific Research (NWO).

\appendix
\section{Preliminary results} \label{prelim}

As mentioned earlier, the stability proof relies on
a Foster-Lyapunov approach.
In this appendix, we first recall various relevant definitions
and a result from Meyn and Tweedie~\cite{MT93}.
After that, we prove that the Markov chain $(Y(t))_{t\in \mathbb{N}_0}$
introduced in Section~\ref{modeld} satisfies all technical
conditions for the Foster-Lyapunov approach to apply.

Let $(\hat Y(t))_{t\in \mathbb{N}_0}$ be a Markov chain with state space
$\hat \Psi$. Further, let $\mathcal{B}(\hat \Psi)$ be the $\sigma$-field
of subsets of $\hat \Psi$. This $\sigma$-field is assumed to be countably
generated, i.e. it is generated by some countable class of subsets of~$\hat{\Psi}$.

\begin{defn}
$(\hat Y(t))_{t\in \mathbb{N}_0}$ is said to be $\varphi$-irreducible if there
exists a measure $\varphi$ on $\mathcal{B}(\hat \Psi)$ such that, whenever
$\varphi(C)>0$, we have
\[
\pr{\min(t:\hat Y(t) \in C)<\infty|\hat Y(0)=\hat y}>0,\quad \forall\hat y \in \hat \Psi.
\]
\end{defn}

Let $\prm{\hat y,C}$ denote the $m$-step transition probability to go from state
$\hat y$ to the set $C \in \mathcal{B}(\hat \Psi)$.
Further define the transition kernel
\[
K_{\frac{1}{2}}(\hat y,C)=\sum_{m=0}^{\infty} \prm{\hat y,C}2^{-(m+1)},\quad \hat y \in \hat \Psi, C \in \mathcal{B}(\hat \Psi).
\]

\begin{defn} \label{psiirr}
$(\hat Y(t))_{t\in \mathbb{N}_0}$ is said to be $\psi$-irreducible if it is
$\varphi$-irreducible for some $\varphi$ and the measure $\psi$ is a maximal
irreducibility measure, i.e. it satisfies the following conditions:
\begin{enumerate}[label=(\roman{*})]
\item For any other measure $\phi '$ the chain is $\phi '$-irreducible if
and only if $\psi(C)=0$ implies $\phi ' (C) = 0$.
\item If $\psi(C)=0$, then $\psi(\{\hat y : \pr{\min(t:\hat Y(t) \in C)<\infty|\hat Y(0)=\hat y}>0\})=0$.
\item The probability measure $\psi$ is equivalent to
\[
\psi ' (C) = \int_{\hat \Psi} \varphi ' (d \hat y) K_{\frac{1}{2}}(\hat y,C),
\]
for any finite measure $\varphi '$ such that the chain is $\phi '$-irreducible.
\end{enumerate}
\end{defn}
\begin{note}
By~\cite[Thm.~4.0.1]{MT93} we know that if there exists a measure $\varphi$
such that the chain is $\varphi$-irreducible, then there exists an (essentially
unique) maximal irreducibility measure $\psi$.
\end{note}

\begin{defn} \label{defsmall}
A set $C \in \mathcal{B}(\hat \Psi)$ is called $\xi_m$-small if there exists
an $m>0$ and a non-trivial measure $\xi_m$ on $\mathcal{B}(\hat \Psi)$,
such that
\[
\prm{\hat y,D}\geq \xi_m(D),\quad \forall \hat y \in C,D\in\mathcal{B}(\hat \Psi).
\]
A set is called small if it is $\xi_m$-small for some $m>0$ and some non-trivial
measure $\xi_m$.
\end{defn}

\begin{defn}
Suppose $(\hat Y(t))_{t\in \mathbb{N}_0}$ is $\varphi$-irreducible. The chain is
called strongly aperiodic when there exists a $\xi_1$-small set $C$ with
$\xi_1(C)>0$.
\end{defn}

Let ${\rm I}_C(x)$ be the indicator function of the set $C$, i.e.~${\rm I}_C(x)=1$
if $x \in C$ and $0$ otherwise.

\begin{defn} \label{phc}
A $\psi$-irreducible chain $(\hat Y(t))_{t\in \mathbb{N}_0}$ is said to be
Harris recurrent if for all $C \in \mathcal{B}(\hat \Psi)$ such that
$\psi(C)>0$ we have
\[
\pr{\sum_{t=1}^{\infty} {\rm I}_C(Y(t))=\infty|\hat Y (0) = \hat y}=1,\quad \forall \hat y \in C.
\]
If a Harris recurrent chain admits an invariant probability measure it is
called positive Harris recurrent.
\end{defn}

%\begin{rem}
%If the chain is strongly aperiodic with respect to $\varphi$, it is also strongly
%aperiodic with respect to $\psi$.
%\end{rem}
%
%\begin{defn}
%Let $a=\{a(n)\}$ be a distribution on $\mathbb{Z}_+$, then the Markov chain
%$\hat Y_a$ with transition kernel
%\[
%K_a(\psi,C)=\sum_{n=0}^{\infty} P^n(\psi,C)a(n),\quad \psi \in \hat \Psi, C \in \mathcal{B}(\hat \Psi),
%\]
%is called the $K_a$-chain with sampling distribution $a$.
%\end{defn}
%
%\begin{defn}
%A set $C\in\mathcal{B}(\hat \Psi)$ is called $\xi_a$-petite, $\xi_a$ a
%non-trivial measure on $\hat \Psi$, if the $K_a$-chain satisfies the bound
%\[
%K_a(x,B)\geq\xi_a(B),\quad \forall x\in C, B\in\mathcal{B}(\hat \Psi).
%\]
%\end{defn}
%
%\begin{rem}
%A small set is petite, as follows by choosing $a=\delta_m$.
%\end{rem}

The following theorem follows from Chapter 14 in~\cite{MT93}.

\begin{thm} \label{thmMT}
Suppose that the chain $(\hat Y(t))_{t\in \mathbb{N}_0}$ is $\psi$-irreducible and
strongly aperiodic.
If there exists some small set $\hat C$, a function $\hat f \geq 1$ and some
nonnegative function $\hat V$ that is finite everywhere such that
\beeq \label{drift}
\Delta \hat V(\hat y) \leq -\hat f(\hat y) + \hat b {\rm I}_{\hat C}(\hat y),\quad \forall \hat y \in \hat \Psi,
\eneq
then $(\hat Y(t))_{t\in \mathbb{N}_0}$ is positive Harris recurrent with invariant
probability measure $\pi$ and
\[
\pi(\hat f)=\int \pi (dx)\hat f(x)<\infty.
\]
Moreover,
\[
\lim_{t\rightarrow \infty} \expect{\hat g(\hat Y(t))|\hat Y(0)=\hat y} =\int \pi (dx)\hat g(x),\quad \forall \hat y \in \hat \Psi,
\]
for any function~$\hat g$ satisfying $|\hat g(x)|\leq \hat c (\hat f(x)+1)$ for all~$x$ and some $\hat c<\infty$.
\end{thm}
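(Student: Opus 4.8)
The plan is to recognize the drift inequality (\ref{drift}) as the standard Foster--Lyapunov condition (condition (V3) in the terminology of \cite{MT93}) and to assemble the three conclusions---positive Harris recurrence, finiteness of $\pi(\hat f)$, and $\hat f$-norm convergence---from a single comparison estimate. Since every small set is petite and $\hat f\geq 1$, the hypotheses here are precisely those of the $\hat f$-Norm Ergodic Theorem of \cite{MT93}, so the task is to trace how its conclusions are extracted from (\ref{drift}).

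First I would set up the comparison (supermartingale) argument. Writing $\hat P$ for the transition kernel, so that $\Delta \hat V=\hat P\hat V-\hat V$, define $M_n=\hat V(\hat Y(n))+\sum_{k=0}^{n-1}[\hat f(\hat Y(k))-\hat b\,{\rm I}_{\hat C}(\hat Y(k))]$. Inequality (\ref{drift}) makes $(M_n)$ a supermartingale; stopping at the first return time $\tau_{\hat C}$ to $\hat C$, the indicator terms drop out along the path and the stopped process $M_{n\wedge\tau_{\hat C}}$ is nonnegative, so optional stopping yields $\expect{\sum_{k=0}^{\tau_{\hat C}-1}\hat f(\hat Y(k))\,|\,\hat Y(0)=\hat y}\leq \hat V(\hat y)+\hat b\,{\rm I}_{\hat C}(\hat y)$. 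Because $\hat f\geq 1$, this also bounds $\expect{\tau_{\hat C}|\hat Y(0)=\hat y}$, and since $\hat V$ is finite everywhere it shows that $\hat C$ is reached in finite expected time from every state and that the sublevel sets of $\hat V$ are $\hat f$-regular. Combined with petiteness of $\hat C$, this is Foster's criterion, giving positive recurrence; and because the hitting probability of $\hat C$ equals one from every $\hat y$, one obtains genuine Harris recurrence rather than recurrence on a merely full set.

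Next I would produce the invariant probability measure and verify $\pi(\hat f)<\infty$. The regeneration structure at the small set $\hat C$ (via the split-chain construction of \cite{MT93}) yields an invariant probability measure $\pi$ together with a Kac-type representation in which $\pi(\hat f)$ is proportional to $\expectsmall{\sum_{k=0}^{\tau_{\hat C}-1}\hat f(\hat Y(k))}$ averaged over a cycle started in $\hat C$; the regularity estimate of the previous step bounds this cycle sum, so $\pi(\hat f)<\infty$. I expect this to be the main obstacle: one cannot simply integrate (\ref{drift}) against $\pi$ to conclude $\pi(\hat f)\leq \hat b\,\pi(\hat C)$, because $\hat V$ need not be $\pi$-integrable a priori, so the identity $\pi\hat P\hat V=\pi\hat V$ is not automatic. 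The clean route is therefore through the cycle representation (with a truncation and monotone-convergence step) rather than a naive integration of the drift.

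Finally, the convergence statement follows from the $\hat f$-Norm Ergodic Theorem. Strong aperiodicity together with the $\hat f$-regularity established above places the chain within the scope of Theorem~14.0.1 of \cite{MT93}, giving $\|\hat P^t(\hat y,\cdot)-\pi\|_{\hat f}\to 0$ for every $\hat y$, the convergence holding at all starting states precisely because the chain is Harris. Any $\hat g$ with $|\hat g(x)|\leq \hat c(\hat f(x)+1)$ satisfies $|\hat g|\leq 2\hat c\,\hat f$ by $\hat f\geq 1$, so its $\hat f$-norm is finite and the stated limit $\lim_{t\to\infty}\expect{\hat g(\hat Y(t))|\hat Y(0)=\hat y}=\int\pi(dx)\hat g(x)$ is an immediate instance of $\hat f$-norm convergence.
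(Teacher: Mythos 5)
Your proposal is correct and takes essentially the same route as the paper, which in fact offers no proof of its own: it simply states that the theorem ``follows from Chapter 14 in~\cite{MT93}.'' Your sketch is a faithful reconstruction of precisely that machinery---the comparison (supermartingale) theorem applied to the drift condition~(\ref{drift}), the split-chain/Kac cycle representation to get $\pi(\hat f)<\infty$ (correctly flagging that one cannot naively integrate the drift inequality against $\pi$), and the $\hat f$-norm ergodic theorem, with smallness implying petiteness and finiteness of $\hat V$ everywhere upgrading the conclusions to all starting states---so it supplies the argument the paper delegates to the citation.
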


To prove that our Markov chain fulfills the conditions in
Theorem~\ref{thmMT}, we first show that our $\sigma$-field, $\mathcal{B}(\Psi)$,
is countably generated.
\begin{lem} \label{countgen}
$\mathcal{B}(\Psi)$ is the Borel $\sigma$-field. Furthermore, $\mathcal{B}(\Psi)$ is
countably generated.
\end{lem}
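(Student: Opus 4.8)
The plan is to interpret the ``Borel $\sigma$-field'' on $\Psi$ as the one generated by the vague topology, which on the space of finite measures over the compact metric space $H \subseteq \mathbb{R}^n$ coincides with the weak topology and is generated by the evaluation functionals $y \mapsto \int_H f \, dy$, $f \in C(H)$. Writing $\mathcal{B}_{\mathrm{top}}(\Psi)$ for this Borel $\sigma$-field, the task splits into the two inclusions $\mathcal{B}(\Psi) \subseteq \mathcal{B}_{\mathrm{top}}(\Psi)$ and $\mathcal{B}_{\mathrm{top}}(\Psi) \subseteq \mathcal{B}(\Psi)$, together with the countable-generation claim, which I would extract from the second inclusion.

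For the first inclusion I would show that every evaluation map $y \mapsto y(B)$, $B \in \mathcal{B}(H)$, is $\mathcal{B}_{\mathrm{top}}(\Psi)$-measurable. The integration functionals $y \mapsto \int_H f \, dy$ are continuous, hence Borel measurable. For an open set $U \subseteq H$ the functions $f_n(x) = \min\{1, n \, d(x, H \setminus U)\}$ lie in $C(H)$ and increase pointwise to $\mathbf{1}_U$, so by monotone convergence $y(U) = \lim_n \int_H f_n \, dy$ is a pointwise limit of Borel functions and therefore Borel. The collection $\mathcal{D} = \{B \in \mathcal{B}(H) : y \mapsto y(B) \text{ is } \mathcal{B}_{\mathrm{top}}(\Psi)\text{-measurable}\}$ contains $H$ and all open sets and, because each $y$ is an additive finite measure, is closed under proper differences and increasing unions; hence $\mathcal{D}$ is a $\lambda$-system containing the $\pi$-system of open sets, and Dynkin's theorem gives $\mathcal{D} = \mathcal{B}(H)$.

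For the second inclusion and countable generation simultaneously, I would exploit that $C(H)$ is separable (as $H$ is compact metric), fixing a countable dense family $\{f_i\}_{i \in \mathbb{N}}$. Each map $y \mapsto \int_H f_i \, dy$ is $\mathcal{B}(\Psi)$-measurable: approximating $f_i$ uniformly by simple functions $s = \sum_j c_j \mathbf{1}_{B_j}$ makes $\int_H s \, dy = \sum_j c_j y(B_j)$ a finite combination of evaluations, and passing to the limit preserves measurability. On the other hand, since $\{f_i\}$ is convergence-determining, the map $y \mapsto (\int_H f_i \, dy)_{i}$ embeds $\Psi$ homeomorphically into $\mathbb{R}^{\mathbb{N}}$, so $\mathcal{B}_{\mathrm{top}}(\Psi)$ is precisely the $\sigma$-field generated by the countable family $\{y \mapsto \int_H f_i \, dy\}$. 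Combining, $\mathcal{B}_{\mathrm{top}}(\Psi) = \sigma(\,y \mapsto \int_H f_i \, dy : i \in \mathbb{N}\,) \subseteq \mathcal{B}(\Psi)$. Together with the first inclusion this yields $\mathcal{B}(\Psi) = \mathcal{B}_{\mathrm{top}}(\Psi)$, and the countable collection $\{\{y : \int_H f_i \, dy \le q\} : i \in \mathbb{N}, q \in \mathbb{Q}\}$ generates it, proving countable generation.

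The step I expect to be the main obstacle is the identification in the second inclusion of the Borel $\sigma$-field of the vague topology with the $\sigma$-field generated by the countably many integration functionals, i.e.\ verifying metrizability and separability of $\Psi$ and that the functionals form a convergence-determining (hence topology-generating) family, so that the Borel $\sigma$-field collapses onto a countable generator. An alternative, more hands-on route that sidesteps the topology and delivers countable generation directly is to fix a countable algebra $\mathcal{A}$ generating $\mathcal{B}(H)$ (e.g.\ finite unions of boxes in $\mathbb{R}^n$ with rational corners intersected with $H$) and repeat the Dynkin argument to show $\mathcal{B}(\Psi) = \sigma(\,y \mapsto y(A) : A \in \mathcal{A}\,)$; I would use this as a cross-check and as the cleanest source of the countable generator.
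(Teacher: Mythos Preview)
Your argument is correct and covers the same two facts as the paper---identification of $\mathcal{B}(\Psi)$ with the Borel $\sigma$-field of the vague topology, and countable generation---but the routes differ in style. The paper does not prove the identification by hand: it simply observes that $H$, being compact, is a complete separable metric space and then invokes \cite[Thm.~A2.6.III]{DV03}, which states directly that the Borel $\sigma$-field on the space of counting measures (with the vague topology) coincides with the smallest $\sigma$-field making all evaluation maps $y\mapsto y(B)$ measurable, and that $\Psi$ is itself Polish. For countable generation the paper then uses only the separability of $\Psi$: pick a countable dense set $\mathcal{D}\subset\Psi$, form the countable family $\mathcal{S}_0$ of finite intersections of open balls with centers in~$\mathcal{D}$ and rational radii, and cite \cite[Lemma~A2.1.III]{DV03} to conclude that $\mathcal{S}_0$ generates $\mathcal{B}(\Psi)$.

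So the paper's proof is a two-line citation exercise, whereas you reconstruct both directions explicitly via monotone-class and approximation arguments, and extract a countable generator from a dense sequence in $C(H)$ (or, in your alternative route, from a countable generating algebra of $\mathcal{B}(H)$). Your approach is more self-contained and makes the mechanism visible; the paper's approach is shorter and leans on standard point-process machinery. The ``main obstacle'' you flag---metrizability/separability of $\Psi$ and that a dense family in $C(H)$ is convergence-determining---is exactly what the paper sidesteps by quoting Daley--Vere-Jones, so both arguments ultimately rest on the same structural fact, just packaged differently.
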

\begin{proof}
In this proof we will use some definitions and results in measure theory,
see~\cite{DV03} and~\cite{Kal97} for more details.

First note that $\comps$ endowed with the topology generated by the open
sets defines a complete separable metric space as $\comps$ is compact. Then it follows by
~\cite[Thm.~A2.6.III]{DV03} that the Borel $\sigma$-field of $\Psi$ is
the smallest $\sigma$-field with respect to which the map
$y \rightarrow y(B)$ is measurable for any Borel set $B\subseteq \comps$.
Further it follows that $\Psi$ endowed with the {\it vague topology}
is a complete separable metric space. The vague topology is the topology
generated by the mappings $\phi \rightarrow \phi g = \int g d\phi$, with
$\phi$ a measure on $\Psi$, for all continuous functions
$g:\comps\rightarrow\mathbb{R}^{+}$ with compact support.

Since the space is separable, there exists a countable dense set $\mathcal{D}$
in this space. Let $\mathcal{S}_0$ be the class of all finite intersections of
all open sets $\{x\in \comps:D(x,d)<r\}$, with $d\in\mathcal{D}$ and $r\in \mathbb{Q}^+$. Then, by
~\cite[Lemma~A2.1.III]{DV03}, $\mathcal{S}_0$ is countable and generates
the Borel $\sigma$-field, $\mathcal{B}(\Psi)$.
\end{proof}

We will now prove that our Markov chain satisfies the irreducibility and
aperiodicity properties of Theorem~\ref{thmMT}.

\begin{lem} \label{irapsm}
$(Y(t))_{t\in \mathbb{N}_0}$ is $\varphi$-irreducible
and strongly aperiodic, where $\varphi$ is the Dirac measure on $\Psi$
assigning unit mass to the empty configuration $y_0$. Moreover, the level
sets of the form $L_m=\{y\in\Psi:y(\comps)\leq m\},m> 0,$ are small.
\end{lem}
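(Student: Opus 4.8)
The plan is to prove the three assertions — $\varphi$-irreducibility, strong aperiodicity, and smallness of the level sets $L_m$ — in a way that reduces everything to a single fundamental estimate: that from any configuration $y$ with at most $m$ particles, the chain can reach the empty configuration $y_0$ in a bounded number of steps with probability bounded below by a positive constant depending only on $m$. I would take $\varphi$ to be the Dirac measure at $y_0$ as instructed, so that $\varphi$-irreducibility requires showing $\mathbb{P}\{\min(t : Y(t) = y_0) < \infty \mid Y(0) = y\} > 0$ for every $y \in \Psi$. The key observation is that, by Property~\ref{epsn} of $F$ and the definition of the sets $S_i \in \Theta$, there is always an admissible set of size $\mu$ available whenever enough distinct regions $P_k$ are occupied, and moreover a single region can always be emptied one particle at a time since $\{k\}$ is guaranteed for every $k$. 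So from any state with finitely many particles, there is a positive-probability event on which no new batches arrive (this uses $\mathbb{P}\{A = 0\} > 0$, which follows since $A$ has mean $\lambda\beta < \mu < \infty$ and batches have size at most one with positive probability, forcing an atom at $0$ unless $\lambda$ is degenerate — I would make this explicit) and on which the randomly selected admissible set removes at least one particle each slot, driving the system empty.

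First I would establish the no-arrival atom and the one-step removal lower bound: conditional on no arrivals, the probability that random admissible-set scheduling selects a \emph{nonempty} admissible set is bounded below, because whenever $y \neq y_0$ there is at least one guaranteed singleton available, and the selection probability $q_S(y)$ of any fixed admissible $S$ is the ratio in the definition of $R(t,y)$, which is at least $1 / \sum_{u \in \chi(y)} F(u)\prod y(\{x\})$, a quantity controlled from below once the total particle count is bounded by $m$. Second, I would iterate this: starting from $y$ with $y(H) \le m$, on the event that no batch arrives for $m$ consecutive slots and a nonempty set is removed each time, the chain reaches $y_0$ within $m$ steps, and this event has probability at least some $\delta_m > 0$. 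This simultaneously gives $\varphi$-irreducibility (the empty state is reachable from everywhere) and the small-set estimate.

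For smallness of $L_m$, I would exhibit the minorization directly: for every $y \in L_m$ and every $D \in \mathcal{B}(\Psi)$,
\[
\mathbb{P}^{m+1}\{y, D\} \ge \mathbb{P}\{Y(m) = y_0 \mid Y(0) = y\}\, \mathbb{P}\{Y(1) \in D \mid Y(0) = y_0\} \ge \delta_m\, \xi(D),
\]
where $\xi(\cdot) = \mathbb{P}\{Y(1) \in \cdot \mid Y(0) = y_0\}$ is the one-step law from the empty state, a fixed non-trivial measure independent of $y$. Setting $\xi_{m+1} = \delta_m\, \xi$ shows $L_m$ is $\xi_{m+1}$-small. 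Strong aperiodicity then follows because $L_m$ is itself small with an $m{=}1$-type minorization after this reduction: taking $C = \{y_0\}$ (or any $L_m$) I note $\xi(\{y_0\}) = \mathbb{P}\{A(1,H) = 0\} > 0$, so the small measure charges the set $C$, giving a $\xi_1$-small set $C$ with $\xi_1(C) > 0$ as required by the definition of strong aperiodicity. The main obstacle I anticipate is the bookkeeping needed to bound the selection probability $q_S(y)$ from below uniformly over $L_m$: one must argue that when total count is at most $m$, the denominator $\sum_{u \in \chi(y)} F(u)\prod_{x: u(\{x\})>0} y(\{x\})$ is bounded above (it is, crudely, by the number of admissible subsets times a product of occupancies, all controlled by $m$), while the numerator for a conveniently chosen nonempty $S$ is bounded below; this requires care but no deep idea, and the measurability of all the events involved is guaranteed by Lemma~\ref{countgen}.
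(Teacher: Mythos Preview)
Your overall architecture---reduce everything to a uniform lower bound on the probability of emptying from $L_m$, then derive irreducibility, smallness and strong aperiodicity from that single estimate---matches the paper's. The difference is that you minorize by $\delta_m \cdot \mathbb{P}\{Y(1)\in\cdot \mid Y(0)=y_0\}$ while the paper minorizes directly by a multiple of the Dirac mass at $y_0$; both are fine.

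There is, however, a genuine gap. Your argument hinges on $\mathbb{P}\{A=0\}>0$, and the justification you offer (``$A$ has mean $\lambda\beta<\mu<\infty$ and batches have size at most one with positive probability, forcing an atom at $0$ unless $\lambda$ is degenerate'') does not hold. The model allows the number of batches per slot to be deterministically at least~$1$ and the batch size to be deterministically at least~$1$; for instance, exactly one batch of exactly one particle each slot gives $A\equiv 1$, $\lambda=\beta=1$, $\mathbb{P}\{\text{batch size}\le 1\}=1>0$, and $\lambda\beta<\mu$ whenever $\mu\ge 2$, yet $\mathbb{P}\{A=0\}=0$. In that regime your ``no arrivals for $m$ slots'' event has probability zero, the bound $\delta_m>0$ fails, and so do $\varphi$-irreducibility, smallness, and your strong aperiodicity argument (which explicitly uses $\xi(\{y_0\})=\mathbb{P}\{A=0\}>0$).

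The paper handles this by splitting into two cases. When $\mathbb{P}\{A=0\}>0$ it argues exactly as you do. When $\mathbb{P}\{A=0\}=0$, it sets $j^*=\min\{j:\mathbb{P}\{A=j\}>0\}$, observes that $j^*\le \mu-1$ (since $\beta\ge 1$ forces $\lambda<\mu$), and then constructs a positive-probability scenario in which, at each step, exactly $j^*$ new particles arrive and are steered (via the uniform spatial law and the partition from Property~\ref{epsn}) into regions that, together with one of the original particles, form a guaranteed admissible set of size $j^*+1$. Thus the net particle count still decreases by one per step with positive probability, and the chain reaches $y_0$ in $n$ steps from any state with $n\le m$ particles. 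You need either this construction or an equivalent one to close the gap.
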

\begin{proof}
The proof proceeds along similar lines as in~\cite{LU10}.

Consider an initial configuration $y$ with $y(\comps)=n\leq m$ particles, so
$y\in L_m$. Because $\pr{A=0}>0$ by assumption, the probability that the system is empty after $m$ time slots
is greater than the probability that no particles arrive during the first
$m$ time slots times the probability that the $n$ particles are served
in the first $m$ time slots. Thus, as in a non-empty configuration
the probability that at least one particle is served in a time slot is
at least $\frac{1}{2}$,
\[
\prm{y,\{y_0\}}\geq\pr{A=0}^m \left(\frac{1}{2}\right)^m,
\]
which is greater than zero as $\pr{A=0}>0$.
This proves that $(Y(t))_{t\in \mathbb{N}_0}$ is $\varphi$-irreducible in this case, because
$\varphi(D)>0$ only when $y_0 \in D$.

Now, define the measure $\xi_m=\pr{A=0}^m\left(\frac{1}{2}\right)^m\varphi$.
For this measure we have $\prm{y,D}\geq\xi_m(D)$ for all
$D\in\mathcal{B}(\Psi)$. So we see that $L_m$ is $\xi_m$-small.
Further, $\{y_0\}$ is $\xi_1$-small and $\xi_1(\{y_0\})>0$, thus
$(Y(t))_{t\in \mathbb{N}_0}$ is strongly aperiodic.

\end{proof}

\section{Proof details}\label{proofdetailsRAS}
In this section we provide the full proof details of Lemma~\ref{bounded} and Lemma~\ref{small}.

\subsection{Proof of Lemma~\ref{bounded}}
Remember that at most one particle can be removed from each region in every time slot. We thus get
\begin{align*}
\hspace{2em}&\hspace{-2em}\expect{V(Y(t + 1)) | Y(t) = y} \\
=&\expectbig{\sum_{k=1}^{K} x_k(Y(t + 1)) \log(\max(x_k(Y(t + 1)),1)) \Big | Y(t) = y} \\
=&\sum_{k=1}^{K}\expectbig{x_k(Y(t + 1)) \log(\max(x_k(Y(t + 1)),1)) \Big | Y(t) = y} \\
=&\sum_{k: x_k(y) \geq 2} p_k(y)\expect{(x_k(y)+A_k(t)-1) \log(x_k(y)+A_k(t)-1)} \\
&+\sum_{k: x_k(y) \geq 2} (1-p_k(y))\expect{(x_k(y)+A_k(t)) \log(x_k(y)+A_k(t))} \\
&+\sum_{k: x_k(y) = 1} p_k(y)\pr{A_k(t)\geq 1}\expect{(1+A_k(t)-1) \log(1+A_k(t)-1) | A_k(t)\geq 1} \\
&+\sum_{k: x_k(y) = 1} (1-p_k(y))\expect{(1+A_k(t)) \log(1+A_k(t))} \\
&+\sum_{k: x_k(y) = 0} \pr{A_k(t)\geq 1}\expect{A_k(t) \log(A_k(t)) | A_k(t)\geq 1}, \\
\end{align*}
We further have
\begin{align*}
\hspace{2em}&\hspace{-2em}\sum_{k: x_k(y) \geq 2} p_k(y)\expect{(x_k(y)+A_k(t)-1) \log(x_k(y)+A_k(t)-1)} \\
&+\sum_{k: x_k(y) \geq 2} (1-p_k(y))\expect{(x_k(y)+A_k(t)) \log(x_k(y)+A_k(t))} \\
=&\sum_{k: x_k(y) \geq 2} p_k(y)\expect{(x_k(y)+A_k(t)-1)\Big(\log(x_k(y)) + \log\Big(1+\frac{A_k(t)}{x_k(y)}-\frac{1}{x_k(y)}\Big)\Big)} \\
&+\sum_{k: x_k(y) \geq 2} (1-p_k(y))\expect{(x_k(y)+A_k(t))\Big(\log(x_k(y)) + \log\Big(1+\frac{A_k(t)}{x_k(y)}\Big)\Big)} \\
=&\sum_{k: x_k(y) \geq 2} (x_k(y)+\expect{A_k}-p_k(y))\log(x_k(y)) \\
&+\sum_{k: x_k(y) \geq 2} p_k(y)\expect{(x_k(y)+A_k(t)-1)\log\Big(1+\frac{A_k(t)}{x_k(y)}-\frac{1}{x_k(y)}\Big)} \\
&+\sum_{k: x_k(y) \geq 2} (1-p_k(y))\expect{(x_k(y)+A_k(t))\log\Big(1+\frac{A_k(t)}{x_k(y)}\Big)}. \\
\end{align*}
Now notice that for constants $a\geq 0,b\geq 0,c>0$
\begin{align*}
\hspace{2em}&\hspace{-2em}\expect{(a+A_k(t)) \log\Big(b+\frac{A_k(t)}{c}\Big)}\\
=&\expect{(a+A_k(t)) (\log (bc+A_k(t)) - \log (c))}\\
\leq&\expect{(a+A_k(t)) (\log (bc+A_k(t))-\log (c)) | A_k(t)\geq 1 )}\\
\leq&\expect{(a+A_k(t)) (bc+\log (A_k(t))-\log (c)) | A_k(t)\geq 1 )}\\
=& a(bc-\log (c)) + \expect{ bcA_k(t) + a \log (A_k(t)) | A_k(t)\geq 1 )}+ \expect{A_k(t) \log (A_k(t))| A_k(t)\geq 1 } \\
\leq& a(bc-\log (c)) + \frac{abc\expect{A_k(t)}}{1-\pr{A_k(t)=0}}+ \expect{A_k(t) \log (A_k(t))| A_k(t)\geq 1 }.
\end{align*}
Thus, as $\expect{A_k(t)}$ and $\expect{A_k(t) \log (A_k(t))| A_k(t)\geq 1 }$ are bounded, we find
\bear \label{boundedlog}
\expect{(a+A_k(t)) \log\Big(b+\frac{A_k(t)}{c}\Big)}<\infty.
\enar
Hence $\Delta V(y)=G(y)+G_2(y)$, with
\begin{align*}
G_2(y)=&\sum_{k: x_k(y) = 1} p_k(y)\pr{A_k(t)\geq 1}\expect{(1+A_k(t)-1) \log(1+A_k(t)-1) | A_k(t)\geq 1} \\
&+\sum_{k: x_k(y) = 1} (1-p_k(y))\expect{(1+A_k(t)) \log(1+A_k(t))} \\
&+\sum_{k: x_k(y) = 0} \pr{A_k(t)\geq 1}\expect{A_k(t) \log(A_k(t)) | A_k(t)\geq 1} \\
&+\sum_{k: x_k(y) \geq 2} p_k(y)\expect{(x_k(y)+A_k(t)-1)\log\Big(1+\frac{A_k(t)}{x_k(y)}-\frac{1}{x_k(y)}\Big)} \\
&+\sum_{k: x_k(y) \geq 2} (1-p_k(y))\expect{(x_k(y)+A_k(t))\log\Big(1+\frac{A_k(t)}{x_k(y)}\Big)}, \\
\end{align*}
which is a bounded function by~(\ref{boundedlog}) and as $K<\infty$.
\qed

\subsection{Proof of Lemma~\ref{small}}
Consider the sets
\[
\hat B(\epsilon)=\Big\{y\in\Psi:x_k(y)\leq\left(\frac{2|\Theta|}{\epsilon}\right)^{2/\epsilon}, \forall k\in\mathcal{K} \Big\},
\]
and
\[
\hat C=\Big\{y\in\Psi:\log x_k(y)\leq\frac{G_2^{\max}+1}{\epsilon \gamma}, \forall k\in\mathcal{K} \Big\},
\]
with $\gamma = \min_{k\in\mathcal{K}} \expect{A_k} = \min_{k\in\mathcal{K}} \lambda \beta \nu(P_k)>0$.

We see that $B(\epsilon)^c \subseteq \hat B(\epsilon)$ as subsets of one
region are guaranteed and thus $w(y)\geq x_k(y)$, for all
$k=1,\dots,K$. Further we see that
$C \setminus B(\epsilon)^c\subseteq \hat C$,
as follows from the upper bound for $G(y)$ found in
Lemma~\ref{upG}.
Hence,
\[
C \subseteq B(\epsilon)^c \cup (C \setminus B(\epsilon)^c)
\subseteq \hat B(\epsilon) \cup \hat C = \left\{y\in\Psi: x_k(y)\leq M, \forall k\in\mathcal{K}\right\},
\]
where
\[
M=\max\Big(
{\rm e}^{\frac{G_2^{\max}+1}{\epsilon\gamma}},\Big(\frac{2|\Theta|}{\epsilon}\Big)^{2/\epsilon}\Big).
\]
Thus $C \subseteq L_{KM}$, where $L_m$ is the level set,
$L_m=\{y\in\Psi:y(\comps)\leq m\}$. We know by Lemma~\ref{irapsm}
that $L_{KM}$ is small and hence, by definition, $C$ is small.
\qed

\bibliographystyle{abbrv}
\bibliography{RAS}

\end{document}